\newcommand{\bq}{\begin{eqnarray}}
\newcommand{\bqn}{\begin{eqnarray*}}
\newcommand{\enq}{\end{eqnarray}}
\newcommand{\enqn}{\end{eqnarray*}}
\newcommand{\nin}{\noindent}
\newtheorem{thm}{Theorem}[section]
\numberwithin{equation}{section}
\begin{document}
\title{A Closed Form Approximation of Moments of New Generalization of Negative Binomial Distribution}
\author[1]{Sudip Roy \thanks{Corresponding author: sudip.roy@utsa.edu}}
\author[1]{Ram C. Tripathi \thanks{ram.tripathi@utsa.edu}}
\author[2]{N. Balakrishnan  \thanks{bala@mcmaster.ca}}
\affil[1]{Department of Management Science and Statistics, University of Texas at San Antonio, One UTSA Circle, San Antonio, Texas $78249$}
\affil[2]{Department of Mathematics and Statistics, McMaster University, $1280$ Main Street West,  Hamilton, Ontario, Canada $L8S$ $4L8$}
\renewcommand\Authands{ and }
\maketitle
\begin{center}
\bf{Abstract}
\end{center}
In this paper, we propose a closed form approximation to the mean and variance of a new generalization of negative binomial (NGNB) distribution arising from the Extended COM-Poisson (ECOMP) distribution developed by Chakraborty and Imoto (2016)(see \cite{chak16}). The NGNB is a special case of the ECOMP distribution and was named so by these authors.  This distribution is more flexible in terms of dispersion index as compared to its ordinary counterparts. It approaches to the  COM-Poisson distribution (Shmueli et al. $2005$) \cite{shm05} under suitable limiting conditions. The NGNB can also be obtained from the COM-Negative Hypergeometric distribution (Roy et al. $2019$)\cite{Sudip19} as a limiting distribution. In this paper, we present closed form approximations for the mean and variance of the NGNB distribution. These approximations can be viewed as the mean and variance of convolution of independent and identically distributed negative binomial populations. The proposed closed form approximations of the mean and variance will be helpful in building the link function for the generalized negative binomial regression model based on the NGNB distribution and other extended applications, hence resulting in enhanced applicability of this model.  \\

\textit{\bf Keyword}: Generalized Negative Binomial; COM Poisson; Dispersion Index; Failure rate; Log-convexity; Kemp's Family of Distributions; Closed Form Approximation to Mean  \\ \\
\section{Introduction}
  In this paper, we present the probability function (pf) of the  NGNB model  (Chakraborty and Imoto 2016)\cite{chak16} and propose  closed form approximations for its mean and variance.  The approximate expression for the mean can be used to develop a link function for the new generalized negative binomial regression model. We plan to develop and report this in a separate paper. This will enhance the applicability of the NGNB model.   The NGNB is a particular case of the ECOMP distribution when three of its parameters are equal. The ECOMP distribution has many important characteristics.  It approaches to the COM-Poisson distribution under suitable limiting conditions. The NGNB distribution is also quite versatile. Its dispersion index is flexible and hence, it can accommodate over-dispersed and under-dispersed data. It can be log-concave or log-convex based on the values of its parameters. Hence, it has the properties of  increasing failure rate (IFR) and decreasing failure rate (DFR). \\
  The paper is organized as follows. In Section 2, we provide the  probability function of the NGNB distribution including its parameters and briefly present its moments and probability generating function. We show that the NGNB model can be regarded as a member of the Kemp's family of distributions. In Section 3, we will present the closed form approximations of its mean and variance and investigate its behavior through a simulation study. We assess the closeness of the proposed approximations for the mean and variance to their true values by analyzing error, bias and mean square error  for different combinations of the parameters. In the appendix, we define the log-concavity(convexity) and use these to characterize the IFR(DFR) properties of the NGNB failure rate for various range of values of its parameters. The theoretical derivations of these characteristics are also provided.

\section{New Generalization of Negative Binomial Distribution (NGNB)}
 The negative binomial distribution arises by sampling with replacement from a population consisting of two types of
 items, say $S$ : Success and $F$: Failure, until we get $k$ number of successes, where the probability of success is $p$
 and  the probability of failure is $\textit{q} = 1-\textit{p}$ for each independent trial. Let $X$ denote the number of
 failures before getting $k$ successes. Then $X$ is said to have a negative binomial distribution with parameters
 \textit{k} and \textit{p}. Its pf is given by
\bq \label{e:NB} P(X=x)=\binom{x+k-1}{x} p^k q^x    \;\; \quad
x=0,1,2,\cdots. \enq This distribution is always over-dispersed (variance
greater than the mean) and belongs to the family of power series
distributions. The mean, variance and the probability generating function
(pgf) of the NB model are given by \bqn
E(X)&=&\frac{kq}{p},\\
Var(X)&=& \frac{kq}{p^2},\\
G(s)&=& E(s^X)= p^k{(1-qs)}^{-k}. \enqn
Following  similar notation as for
COM Poisson, we have described the NGNB distribution as a generalized form of the
negative binomial distribution given in (\ref{e:NB}) above. This is achieved  by raising the combinatorial term in the probability
 function (pf) of the negative binomial distribution to a power $\gamma$ which acts as the shape parameter and by normalizing
 the resulting expression. The NGNB random variable is denoted by $Y$ with its pf defined as
\bq \label{e:COM-NB}
P(Y=y)=\frac{{\binom{y+k-1}{y}}^\gamma q^{y}}{\sum_{j=0}^{\infty}{\binom{j+k-1}{j}}^\gamma q^{j}}\\
y=0,1,\cdots \nonumber \; \text{ and} \; \gamma \in \Re, \; k > 0,\; 0<q<1 .
\enq
We denote the normalizing constant by
\[ Z(\gamma,k,q)=\sum_{j=0}^{\infty}{\binom{j+k-1}{j}}^\gamma q^{j}. \]
 Whenever necessary, we will denote the distribution by $NGNB(\gamma,k,p)$. The NGNB model provides a more flexible alternative to the negative binomial, since the additional
  shape parameter $\gamma$ makes the new model more or less over-dispersed as compared to the negative binomial. When
   $\gamma=0$, the normalizing constant $Z(\gamma,k,q)= \frac{1}{1-q}$, and NGNB reduces to the geometric distribution
    with probability function $p(y)=  pq^y$. The NGNB model is a member of the more general class called  ECOMP studied by  Chakraborty and Imoto (2016) \cite{chak16}. The NGNB was independently studied  by (Roy 2016)\cite{Sudip16} in a Ph.D thesis where it was called COMP-NB. In this section, we investigate additional statistical properties of
    the NGNB (COMP-NB)  model.
\subsection{Miscellaneous results regarding NGNB distribution }
In this section, we discuss some important characteristics of the NGNB model. These include characterization of the monotonicity of its failure rate based on the log-convexity and log-concavity of its pf. We also show that when $\gamma$ is a positive integer, its pgf can be written in terms of the generalized hypergeometric function. This establishes the NGNB  as a member of the Kemp family of distributions and makes it easier to compute its moments.
\subsubsection{Characterization of failure rates of the NGNB}
The monotonicity of failure rate of a discrete model plays an important role in modeling of failure time of components measured in number of cycles until the failure occurs. As established in (Gupta, Gupta \& Tripathi 1997)\cite{gupta97}, the log-concavity (log-convexity) of the pf of a  distribution determines the monotonicity of its failure rate.
 In the appendix, we characterize the failure rate of the NGNB in terms of the log-concavity (log-convexity) of its pf.    We have showed in theorem $A.1$ in the appendix that pf of NGNB is log-concave for positive $\gamma$ when $k>1$ and log-convex when $\gamma<0$ and $k>1$. Hence, we have established that the NGNB has increasing (decreasing) failure rate for $\gamma$ greater (less) than $0$, when $k>1$. In Theorem $A.2$ of the Appendix, we also show that the NGNB model approaches to the COM-Poisson distribution with parameters $\lambda$ and $\gamma$ in the limit as $k \rightarrow \infty$ and $p \rightarrow 0$ such that $k p=\lambda$ remains fixed. This parallels to the similar limiting results relating the NB and the Poisson distributions.

\subsubsection{Moments and probability generating function (for $\gamma$ a positive integer)}
  In this section, we  express the pgf of the NGNB in the form of generalized hypergeometric function when $\gamma$
   is a positive integer. We use this representation to find the factorial moments and hence other moments of the distribution in this
   special case.\\
  Let us first introduce some functions and notations which will be used throughout:
\begin{itemize}
\item Pochhammer symbol or rising factorial:
\bqn
\begin{array}{lcl}
(b)_0=1,\\
(b)_k=b(b+1)\cdots(b+k-1), \quad k\ge 1;
\end{array}
\enqn
\item Generalized hypergeometric function:
\bqn
\,_{p}F_{q}\left(a_1,a_2,\cdots,a_p;b_1,b_2,\cdots,b_q;z\right)=\sum_{n=0}^{\infty}{\frac{(a_1)_n(a_2)_n\cdots(a_p)_n }{(b_1)_n(b_2)_n\cdots(b_q)_n}}\frac{z^n}{n!}.
\enqn
\end{itemize}
The following theorem gives an expression for the pgf of NGNB distribution
in terms of the generalized hypergeometric function.
  \begin{thm} Let $Y$ denote the random variable with the $NGNB(\gamma,k,p)$ distribution. When $\gamma$ is a positive integer, the pgf of $Y$ can be written in terms of the hypergeometric series as follows. \\
 \bq
G(\gamma,s)= E(s^y)= \frac{\,_{\gamma}F_{\gamma-1}\left(k,k,\cdots,k;1,1,\cdots,1;qs\right)}{\,_{\gamma}F_{\gamma-1}\left(k,k,\cdots,k;1,1,\cdots,1;q\right)}.
\enq
\end{thm}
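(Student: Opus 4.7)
The plan is to compute the pgf directly from the definition and recognize the resulting series as a ratio of generalized hypergeometric functions via the identity $\binom{y+k-1}{y} = (k)_y/y!$.

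First I would write
\[
G(\gamma,s) \;=\; \sum_{y=0}^{\infty} s^y\, P(Y=y)
\;=\; \frac{1}{Z(\gamma,k,q)}\sum_{y=0}^{\infty}\binom{y+k-1}{y}^{\!\gamma}(qs)^{y},
\]
so it suffices to identify both the numerator series and the normalizing constant $Z(\gamma,k,q)$ as $\,_{\gamma}F_{\gamma-1}$ evaluated at $qs$ and $q$, respectively.

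Next I would convert the binomial coefficient to a Pochhammer ratio: since $\binom{y+k-1}{y}=\frac{k(k+1)\cdots(k+y-1)}{y!}=\frac{(k)_y}{(1)_y}$, raising to the $\gamma$-th power gives $\binom{y+k-1}{y}^{\gamma}=\frac{[(k)_y]^{\gamma}}{[(1)_y]^{\gamma}}$. The hypergeometric template requires a single $y!$ in the denominator of each term, so I would peel off one factor of $(1)_y=y!$ and rewrite
\[
\binom{y+k-1}{y}^{\!\gamma}(qs)^{y}
\;=\;\frac{\overbrace{(k)_y\cdots(k)_y}^{\gamma\text{ times}}}{\underbrace{(1)_y\cdots(1)_y}_{\gamma-1\text{ times}}}\,\frac{(qs)^{y}}{y!}.
\]
Summing over $y\ge 0$ matches the definition of the generalized hypergeometric function given earlier in the paper, yielding
\[
\sum_{y=0}^{\infty}\binom{y+k-1}{y}^{\!\gamma}(qs)^{y}
\;=\;{}_{\gamma}F_{\gamma-1}\!\left(k,k,\ldots,k;\,1,1,\ldots,1;\,qs\right),
\]
with $\gamma$ copies of $k$ in the numerator parameters and $\gamma-1$ copies of $1$ in the denominator parameters. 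The requirement that $\gamma$ be a positive integer is essential here, since it guarantees that the exponent is integral and the peeling operation produces an integer number of Pochhammer factors on both sides.

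Applying the same identification to $Z(\gamma,k,q)$ (the case $s=1$) and taking the ratio yields the claimed formula. The main thing to watch is the bookkeeping of how many $(k)_y$ and $(1)_y$ factors remain after the $y!$ is pulled out; everything else is a direct substitution, so I do not expect a substantive obstacle. Convergence of the series (for $\gamma\ge 1$ integer and $0<q<1$, with $|s|$ small enough) is inherited from the convergence of $Z(\gamma,k,q)$, which is implicitly assumed in the definition of the NGNB.
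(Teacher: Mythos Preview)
Your proposal is correct and follows essentially the same approach as the paper: write the pgf as a ratio of series, convert $\binom{y+k-1}{y}$ to $(k)_y/(1)_y$, raise to the $\gamma$-th power, peel off one factor of $(1)_y=y!$, and identify the resulting sum as ${}_{\gamma}F_{\gamma-1}(k,\ldots,k;1,\ldots,1;\cdot)$. The paper's argument is line-for-line the same computation, so there is nothing substantive to compare.
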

 \begin{proof}
 Consider the pgf of the NGNB distribution
\bqn
G(\gamma,s)=E(s^y)=\sum_{y=0}^\infty \frac{{\binom{y+k-1}{y}}^\gamma {(sq)}^y}{\sum_{j=0}^{\infty}{\binom{j+k-1}{j}}^\gamma q^{j}}.
\enqn
The numerator in the above can be expressed as a generalized hypergeometric series if $\gamma$ is a positive integer as seen below:
\bqn
\begin{array}{lcl}
\sum_{y=0}^\infty {\binom{y+k-1}{y}}^\gamma {(sq)}^y\\
= \sum_{y=0}^\infty \left(\frac{(y+k-1)!}{y!(k-1)!}\right)^\gamma {(sq)}^y\\
= \sum_{y=0}^\infty \left(\frac{(k)_y(k-1)!}{(1)_y(k-1)!}\right)^\gamma {(sq)}^y\\
=\sum_{y=0}^\infty \frac{{(k)_y}^\gamma}{{(1)_y}^{\gamma-1}} \frac{{(sq)}^y}{y!}\\
=\,_{\gamma}F_{\gamma-1}\left(k,k,\cdots,k;1,1,\cdots,1;qs\right).\\
\end{array}
\enqn Then, the pgf can be written as
 \bq \label{eq: pgf of COMNB}
G(\gamma,s)=\frac{\,_{\gamma}F_{\gamma-1}\left(k,k,\cdots,k;1,1,\cdots,1;qs\right)}{\,_{\gamma}F_{\gamma-1}\left(k,k,\cdots,k;1,1,\cdots,1;q\right)}.
\enq Hence the proof.
\end{proof}
\vskip .15in \nin {\bf Mean, variance and higher order moments of NGNB:}\\
For the $NGNB(\gamma, k,p)$, when $\gamma$ is a positive integer,  the expectation E(Y)
can be obtained from (\ref{eq: pgf of COMNB}) and is expressed as \bqn
E(Y)&=& qk^\gamma \frac{\sum_{j=0}^\infty \binom{k+j}{j}^\gamma\frac{q^j}{(j+1)^{\gamma-1}}}{\sum_{j=0}^\infty \binom{k+j-1}{j}^\gamma q^j}\\
 &=& \textit{q} k^\gamma \frac{\,_{\gamma}F_{\gamma-1}\left(k+1,k+1,\cdots,k+1;2,2,\cdots,2;q\right)}{\,_{\gamma}F_{\gamma-1}\left(k,k,\cdots,k;1,1,\cdots,1;q\right)}.
\enqn
It can be seen that E(Y) exists and is finite because the hypergeometric series has a radius of convergence at
 $1$ for $|q|<1$. Both the series are nearly-poised hypergeometric series of first kind. The $r$th order factorial moment
  and hence the variance of NGNB can be obtained from (\ref{eq: pgf of COMNB}) by evaluating its $r$th derivative
  at $s=1$ when $\gamma$ is a positive integer. As seen above, the mean, variance and higher order moments can not be evaluated in closed form. It would be important if some approximations can
  be developed for the mean and variance of the NGNB model in some special  cases. This will be addressed in Section 3 below.\\

 \subsubsection{The NGNB as a Member of Kemp's Family of Distributions When $\gamma$ is a Positive Integer}
The broad family of generalized hypergeometric probability distributions in the Kemp family \cite{gur77} is known for many useful properties. Most common distributions which are members of Kemp family are Poisson, binomial, negative binomial, hypergeometric and negative hypergeometric distributions. The distributions in this family have their pgf as
\bqn
G_1(z)=\frac{\,_{p}F_{q}\left((a);(b);\lambda z\right)}{\,_{p}F_{q}\left((a);(b);\lambda\right)},
\enqn
where $\,_{p}F_{q}\left((a);(b);z\right)$ is the generalized hypergeometric series with $p$ numerator parameters $(a_1,a_2,\cdots,a_p)$, and $q$ denominator parameters $(b_1,b_2,\cdots,b_q)$.
For the Kemp family of distributions the ratio of successive probabilities is given by
\bq \frac{p(y+1)}{p(y)}=\frac{(a_1+y)(a_2+y)\cdots (a_p+y)}{(b_1+y)(b_2+y)\cdots (b_q+y)}\frac{\lambda}{1+y}.\enq
\begin{thm} The ratio of successive probabilities for the NGNB distribution can be written in the form of the ratio of the probabilities of Kemp Type $1A(i)$ families of distributions.
\end{thm}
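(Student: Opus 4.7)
My plan is to compute $p(y+1)/p(y)$ directly from the NGNB pmf in~(\ref{e:COM-NB}), simplify the resulting binomial coefficient ratio, and then recast the expression so that the parameters of a generalized hypergeometric series can be read off against the Kemp template. Since the normalizing constant $Z(\gamma,k,q)$ cancels in the ratio, only the combinatorial factor and one extra power of $q$ survive, so I would begin from
\begin{equation*}
\frac{p(y+1)}{p(y)} \;=\; q\,\left[\frac{\binom{y+k}{y+1}}{\binom{y+k-1}{y}}\right]^{\gamma}.
\end{equation*}

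Expanding the binomial coefficients as factorials, the bracketed ratio telescopes cleanly to $(y+k)/(y+1)$, giving the compact form $p(y+1)/p(y)=q\,[(y+k)/(y+1)]^{\gamma}$. To align this with the generic Kemp ratio $\frac{(a_1+y)\cdots(a_p+y)}{(b_1+y)\cdots(b_q+y)}\,\frac{\lambda}{1+y}$, I would peel off exactly one factor of $(1+y)$ from the denominator and rewrite
\begin{equation*}
\frac{p(y+1)}{p(y)} \;=\; \frac{(k+y)^{\gamma}}{(1+y)^{\gamma-1}}\cdot\frac{q}{1+y}.
\end{equation*}
From this presentation one simply reads off $p=\gamma$ numerator parameters all equal to $k$, $q_{\text{Kemp}}=\gamma-1$ denominator parameters all equal to $1$, and series argument $\lambda=q$, which is consistent with the pgf derived in~(\ref{eq: pgf of COMNB}).

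To nail down the Type~1A(i) label, I would note that this subclass of the Kemp family is characterized by $p-q_{\text{Kemp}}=1$ (so that the underlying ${}_{p}F_{q}$ has unit radius of convergence), with positive numerator and denominator parameters and with the series argument strictly between $0$ and $1$. Here $p-q_{\text{Kemp}}=\gamma-(\gamma-1)=1$, the parameters $k>0$ and $1>0$ are both positive, and by hypothesis $0<q<1$, so every defining condition is satisfied. The main obstacle is really only the bookkeeping step of correctly partitioning the $\gamma$ copies of $(1+y)$ in the denominator so that exactly one of them plays the role of the universal $1/(1+y)$ factor present in every Kemp ratio; once that partition is fixed, the identification with Type~1A(i) follows directly from the classification given in~\cite{gur77}.
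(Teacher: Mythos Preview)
Your proof is correct and follows essentially the same route as the paper: compute the ratio $p(y+1)/p(y)$ directly from the pmf, simplify the binomial-coefficient quotient to $\left(\frac{y+k}{y+1}\right)^{\gamma}q$, and then split off one $(1+y)$ factor to display $\gamma$ numerator parameters equal to $k$ and $\gamma-1$ denominator parameters equal to $1$ with $\lambda=q$. Your extra paragraph verifying the Type~1A(i) conditions ($p-q_{\text{Kemp}}=1$, positivity, $0<q<1$) is a welcome addition that the paper leaves implicit.
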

\begin{proof}
The ratio of successive probabilities for the NGNB distribution can be written in the form of the Kemp Type $1A(i)$ families of distributions as follows :
\bqn
\frac{p(y+1)}{p(y)}&=&\frac{\binom{y+1+k-1}{y+1}^\gamma q^{y+1}}{\binom{y+k-1}{y}^\gamma q^{y}}\\
&=&\left(\frac{y+k}{y+1}\right)^\gamma  q\\
&=& \frac{(k+y)^\gamma}{(1+y)^{\gamma-1}}\frac{q}{1+y},
\enqn
 which has $\gamma$ numerator factors, $((k+y),(k+y),\cdots,(k+y))$, and $\gamma-1$ denominator factors $((1+y),(1+y),\cdots,(1+y))$ (See equation (2.5)). \\
 Hence the proof.\\
 \end{proof}
 In the following section, we develop approximate expressions for the mean and variance of the NGNB model.

 \section{Closed-form approximate expressions for the mean and variance of the NGNB distribution}
There is no closed form approximations for the mean and variance of the NGNB distribution. It would be of interest to  develop approximate expressions for the mean and variance which may help in building the link function for NGNB regression models. With this in mind, we develop approximate expressions for the mean and variance of the NGNB model and investigate the accuracy of these expressions.
We also  compare the values of the approximate expressions  with their exact values for certain range of
 its parameters and assess the approximations. These approximations are developed based on observing patterns in the tabulated values of the mean and variance for a range of parameter values.
 In
  Tables \ref{tab: COMNB moments1}-\ref{tab: COMNB moments3}, we present values of its mean
 and variance obtained by numerical calculations for a range of values of its parameters. After a close
  examination of these values, especially for large $\gamma$, a pattern seems to emerge to approximate its mean and variance.
  Based on our observations, we propose the following approximations:\\
\[ E(Y) \approx \frac{k \gamma q}{1-q},\ \text{  and  }  Var(Y)\approx\frac{k\gamma q}{(1-q)^2}.\] \\ The closed form approximation of the mean of NGNB has a relation with the mean of convolution of independent and identically distributed negative binomial populations. Thus, the approximate mean and variance of $Y$ can be interpreted as the mean and variance of  $\gamma$ independent and identically distributed populations following the negative binomial distributions referred in (\ref{e:NB}) with mean and variance respectively \[ E(W)= \frac{kq}{1-q},\ \text{  and  }  Var(W)= \frac{kq}{(1-q)^2},\] where $W$ is the corresponding NB random variable with the pdf (2.1).
In the paper from Chakraborty and Imoto 2016 \cite{chak16}, they have also discussed the asymptotic mean in section 2.8 for ECOMP $(\gamma, p, \alpha, \beta)$ which is given below
\bq \label{e: asym ECOMP}
{p}^{\frac{1}{\alpha-\beta}}+ \frac{1-\alpha +(2\gamma-1)\beta}{2(\alpha-\beta)}
\enq
As discussed in their paper, the NGNB distribution is derived from ECOMP when three of its parameters are same, i.e., $\alpha=\beta=\gamma$. However, under this condition, the equation for the ECOMP mean in equation (\ref{e: asym ECOMP}) does not exist for NGNB distribution. Thus, our closed form approximation developed here  extends the properties of NGNB distribution.\\
We now compare the exact values of the mean and variance of NGNB with their proposed approximations for a range of its
parameter values. We also present an analysis of the errors committed when
approximating their exact  values by the above proposed expressions.
 \vskip .15in
 \nin
 {\bf Analysis of errors in approximating $E(Y)$ and $Var(Y)$}\\
 Tables 1, 2 and 3 provide exact and approximate values of $E(Y)$ and
 $Var(Y)$ for a range of values of $\gamma, q $  and $k$. From
 Tables 1 and 2, it can be seen  that when $\gamma \in [0.3, 0.9]$, $q \in [ .1,.8]$ and $k$ takes integer values from $5$ to $9$, the average
 value of the error $E(Y)-k \gamma q/(1-q)$ is $-.0183$ with mean squared
 error (MSE)$=0.3645$. From Table 3, it can be seen  that when $\gamma > 1$ with its range in $[1.2, 2]$,  $q \in [ .4,.8]$ and
  $k$ takes integer values from $5$ to $9$, the average
 value of the error $E(Y)-k \gamma q/(1-q)$ is $0.2511$ with the $MSE=0.3444$.\\
 Similarly, from Tables 1 and 2, it can be seen  that when $\gamma \in [0.3, 0.9]$, $q \in [ .1,.8]$ and $k$ takes integer
 values from $5$ to $9$, the average
 value of the error $Var(Y)-k \gamma q/(1-q)^2$ is $-0.0262$ with $MSE=0.0377$. From Table 3, it can be seen  that when $\gamma > 1$ with its
values in $[1.2, 2]$,  $q \in [ .4,.8]$ and
  $k$ takes integer values from $5$ to $9$, the average
 value of the error $Var(Y)-k \gamma q/(1-q)^2$ is $-0.1438$ with the $MSE=0.1575$.\\

On comparing the average and the MSE of the errors in approximating the mean,
we observe that for $\gamma < 1$, the approximate expression overestimates
the true value of the mean. For $\gamma > 1$, it underestimates the true
value, the approximation being closer when $\gamma < 1$. The values of
the MSE remain relatively similar for the two ranges of
$\gamma$ considered.\\

 On comparing the average and the MSE of the errors in approximating the
 variance, we observe that  the approximate expression overestimates
the true variance  for all the values of $\gamma$ considered here. The
approximate expression is closer to the true value of the variance for
$\gamma < 1$ as compared to the case when $\gamma > 1$. The value of  the MSE
is smaller for the case when $\gamma < 1$. Based on this analysis,  we
conclude that the proposed approximate expression for the variance works
slightly better than the corresponding expression for approximating the mean.

\begin{center}

\begin{tabular}{|c|} \hline

Put Table \ref{tab: COMNB moments1}, Table \ref{tab: COMNB moments2},  Table \ref{tab: COMNB moments3}  here \\ \hline

\end{tabular}
\end{center}
\section{Concluding remarks}
In this paper, we have presented a closed form approximation of the  NGNB distribution. In particular, we have shown that the NGNB approaches to the COM-Poisson distribution under certain conditions. We have investigated some characteristics of the NGNB model, such as, its probability generating function, moments, and limiting properties. We have also investigated the log-concavity and log-convexity of the pf of this distribution and used them to characterize the monotonicity of its failure rate. In a future paper, we plan to formulate a generalized regression model based on the proposed approximate form of the mean of the NGNB model.
\\ \\
\begin{appendices}
\section{APPENDIX}

\subsection{Log-concavity and log-convexity of the NGNB distribution}
The failure rate function for a discrete random variable $Y$  with pf $p(y)$ is defined as
 $$ r(y)=\frac{P(Y=y)}{P(Y\ge y)}.$$
We characterize the failure rate of the NGNB model in terms of log-convexity and log-concavity of its pf which we investigate below.\\
We use the ratio of two consecutive probabilities to determine the log-concavity and log-convexity of a distribution. This is used for determining the monotonicity of failure rates. We will use the following results in this regard from (Kemp 2005) \cite{kemo05}, page $217$: A distribution with pf $p(y)$ is
\bq
 \text{ log-convex, if  }\  \frac{p(y) p(y+2)}{p(y+1)^2} &>& 1  \\
\text{ and log-concave, if  }\  \frac{p(y) p(y+2)}{p(y+1)^2}& <& 1. \enq
The following theorem characterizes the failure rate  of the NGNB in terms of its log-convexity and log-concavity for various values of $\gamma$:\\
\begin{thm} The NGNB has increasing (decreasing) failure rate for $\gamma$ greater (less) than $0$, when $k>1$. \end{thm}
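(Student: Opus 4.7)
The plan is to apply the log-convexity/log-concavity criterion stated in the appendix, together with the result of Gupta, Gupta and Tripathi (1997) that log-concavity of the pf implies IFR while log-convexity implies DFR. The natural first step is to compute the ratio
\[
R(y) \;=\; \frac{p(y)\,p(y+2)}{p(y+1)^2}
\]
for the NGNB pf given in (2.2). Because the normalizing constant $Z(\gamma,k,q)$ and the factor $q^{2y+2}$ cancel, this ratio collapses to
\[
R(y) \;=\; \left(\frac{\binom{y+k-1}{y}\binom{y+k+1}{y+2}}{\binom{y+k}{y+1}^{2}}\right)^{\!\gamma},
\]
so the whole statement is reduced to understanding the sign of $1-r(y)$ where $r(y)$ is the bracketed ratio.

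The second step is to simplify $r(y)$ using the one-step binomial recursion $\binom{y+k}{y+1}/\binom{y+k-1}{y} = (y+k)/(y+1)$. Writing $r(y)$ as the ratio of two consecutive one-step ratios gives
\[
r(y) \;=\; \frac{(y+k+1)(y+1)}{(y+k)(y+2)}.
\]
A direct expansion of numerator and denominator shows that both agree in their $y^{2}$ and $y$ coefficients, and that the constant terms differ by $(k+1)-2k = 1-k$. Hence under the hypothesis $k>1$ we have $r(y)<1$ for every $y\ge 0$, while for $k=1$ equality holds (giving the geometric-type boundary case).

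The third step is the case split on $\gamma$. Since $r(y)\in(0,1)$ under $k>1$, raising to the power $\gamma$ gives $R(y)=r(y)^{\gamma}<1$ when $\gamma>0$ and $R(y)>1$ when $\gamma<0$. By the criterion recalled in the appendix, this means the pf is log-concave in the first case and log-convex in the second; invoking the Gupta--Gupta--Tripathi result then yields IFR for $\gamma>0$ and DFR for $\gamma<0$, as claimed.

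I do not expect a serious obstacle: the proof is essentially a two-line algebraic simplification of $R(y)$ followed by a sign check. The only mildly delicate point is making the role of $k>1$ explicit, since the inequality $r(y)<1$ fails precisely when $k\le 1$, and this is exactly the reason the hypothesis $k>1$ appears in the statement. I will note this in the write-up so that the reader sees why the boundary cases are excluded rather than omitted.
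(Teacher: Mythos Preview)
Your proposal is correct and follows essentially the same route as the paper: compute $p(y)p(y+2)/p(y+1)^2=\bigl((y+k+1)(y+1)/((y+k)(y+2))\bigr)^{\gamma}$, verify that the bracketed ratio is strictly less than $1$ precisely when $k>1$, and then read off log-concavity for $\gamma>0$ and log-convexity for $\gamma<0$ to conclude IFR/DFR via the Gupta--Gupta--Tripathi result. The only cosmetic difference is in how the elementary inequality $(y+k+1)(y+1)<(y+k)(y+2)$ is checked (you compare constant terms after matching the $y^2$ and $y$ coefficients, whereas the paper factors out $(y+k)(y+1)$), but the argument is otherwise identical.
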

\begin{proof}
For the NGNB model, we have,
\bqn
\frac{p(y+2)}{p(y+1)}=\frac{{\binom{y+2+k-1}{y+2}}^\gamma q^{y+2}}{{\binom{y+1+k-1}{y+1}}^\gamma q^{y+1}}\\
={\left(\frac{y+k+1}{y+2}\right)}^\gamma q\\
\frac{p(y+1)}{p(y)}=\frac{{\binom{y+1+k-1}{y+1}}^\gamma q^{y+1}}{{\binom{y+k-1}{y}}^\gamma q^{y}}\\
={\left(\frac{y+k}{y+1}\right)}^\gamma q.\\
\enqn Hence
\bq \label{e:ratio}
\frac{p(y)
p(y+2)}{p(y+1)^2}={\left(\frac{(y+k+1)(y+1)}{(y+k)(y+2)}\right)}^\gamma. \enq
For the expression (\ref{e:ratio}) to be less than $1$, we have \bqn
\begin{array}{lcl}
{\left((y+k+1)(y+1)\right)}^\gamma < {\left((y+k)(y+2)\right)}^\gamma\\
\Rightarrow {(y+k+1)(y+1)} < {(y+k)(y+2)} \; \text{ for}\;  \text{positive}\; \gamma\\
\Rightarrow (y+k)(y+1)+(y+1) < (y+k)(y+1)+(y+k)\\
\Rightarrow y+1<y+k\\
\Rightarrow k>1.
\end{array}
\enqn
So, the pf of NGNB is log-concave for positive $\gamma$ when $k>1$.\\ Similarly, we can prove that its pf is log-convex when $\gamma<0$ and $k>1$.\\
Hence the proof. \end{proof}
\subsection{Characterization of the failure rate of NGNB}
  When $\gamma >0$, the pf of NGNB is log-concave for $k > 1$ and  we can conclude that in this case, NBNG has an increasing failure rate (IFR) \cite{gupta97}.  When $\gamma <0$ and $k>1$, the pf of NGNB is log-convex, hence it has decreasing failure rate. The Figure \ref{fig: Nbposfailure} shows graph for failure rate curves. It shows that the NBNG has IFR for positive value of $\gamma$ and $k>1$, by increasing the value of positive $\gamma$, we are able to lower down the failure rate with respect to its occurrence. The failure rate curves of NGNB, for $\gamma <0$ and $k>1$ show that the failure rate decreases with respect to failure occurrences as $\gamma$ increases.
\begin{figure}[H]
  \centering{
  \resizebox{12cm}{!}{\includegraphics{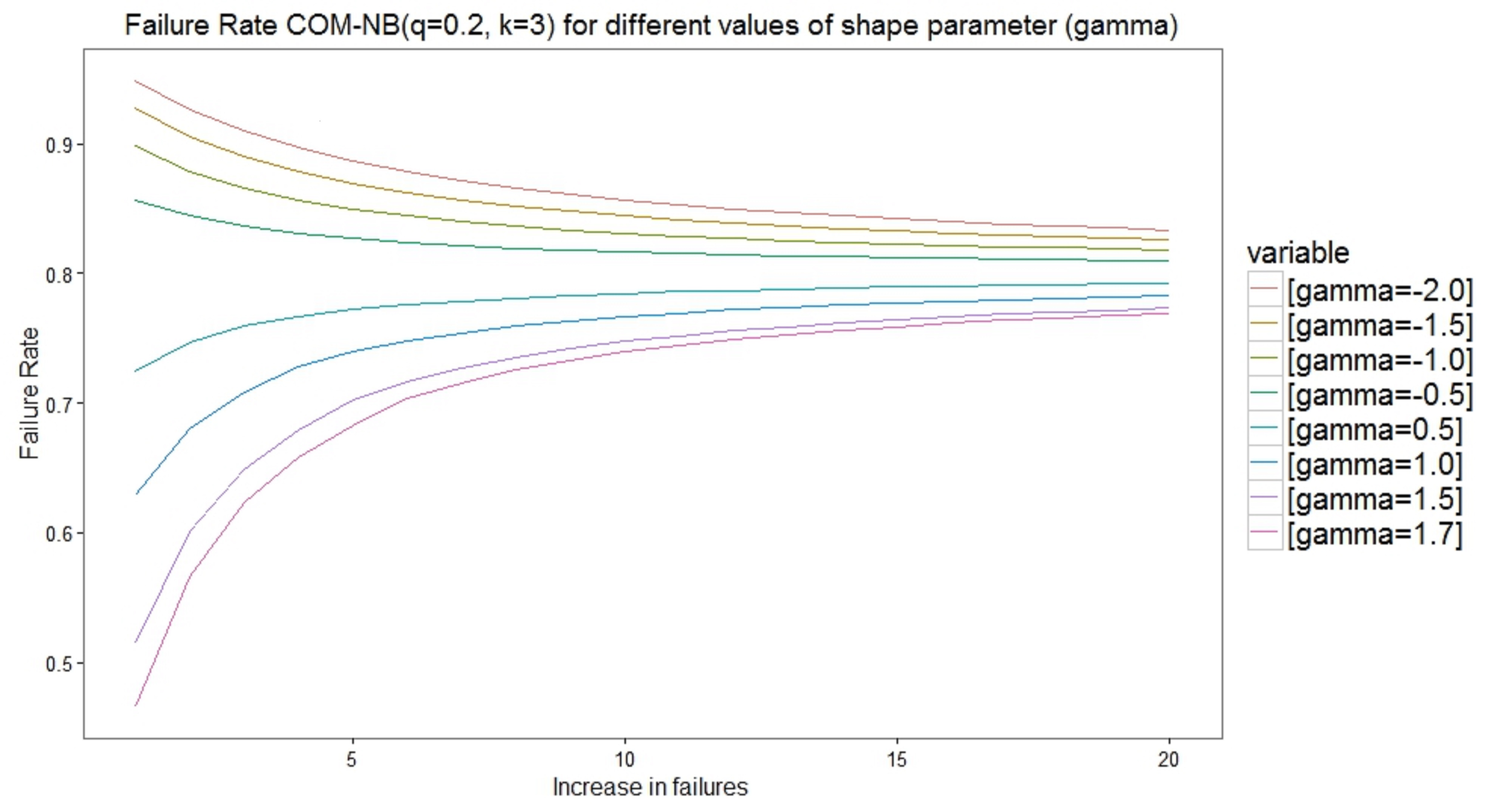}}
  }
 \caption{Failure Rate Plot for NGNB(q=0.2,k=3) for positive and negative values of $\gamma$  showing IFR and DFR Behavior respectively   \label{fig: Nbposfailure}}
\end{figure}
\subsection{Convergence of NGNB to COM-Poisson}
As is well known, negative binomial approaches the Poisson distribution under certain limiting conditions. In this section, we will show that similar relationships hold between the COM-Negative Binomial and COM-Poisson distributions. \\
\begin{thm} NBNG ($\gamma$,$k$,$q$) approaches to COM-Poisson($\lambda$) when $k\rightarrow\infty$ and $q \rightarrow 0$, such that
   $\lambda=k^\gamma q$ remains fixed.\end{thm}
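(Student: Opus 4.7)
The plan is to show pointwise convergence of the NGNB probability mass function to that of the COM-Poisson, by handling the numerator and the normalizing constant separately in the limit $k \to \infty$, $q \to 0$, $k^\gamma q = \lambda$ fixed. Recall that the COM-Poisson pmf in the parametrization used here is $P(X=x) = \lambda^x / [(x!)^\gamma \, Z_{CP}(\lambda,\gamma)]$, where $Z_{CP}(\lambda,\gamma)=\sum_{j\ge 0}\lambda^j/(j!)^\gamma$.

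First, I would rewrite the NGNB numerator using the Pochhammer symbol: since $\binom{y+k-1}{y}=(k)_y/y!$, the unnormalized weight for $y$ equals
\[
\frac{((k)_y)^\gamma\, q^y}{(y!)^\gamma}
\;=\;\frac{\lambda^y}{(y!)^\gamma}\cdot\left(\frac{(k)_y}{k^y}\right)^{\!\gamma},
\]
after substituting $q=\lambda/k^\gamma$. For fixed $y$,
\[
\frac{(k)_y}{k^y}=\prod_{i=0}^{y-1}\!\left(1+\tfrac{i}{k}\right)\ \longrightarrow\ 1\quad\text{as } k\to\infty,
\]
so each unnormalized weight converges to the corresponding COM-Poisson weight $\lambda^y/(y!)^\gamma$. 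This handles the numerator cleanly.

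The main obstacle is the normalizing constant $Z(\gamma,k,q)=\sum_{j\ge 0}((k)_j)^\gamma q^j/(j!)^\gamma$: pointwise convergence of the summands must be upgraded to convergence of the series. The plan is to apply dominated convergence. For $\gamma>0$, use the elementary bound $(k)_j/k^j=\prod_{i=0}^{j-1}(1+i/k)\le e^{j(j-1)/(2k)}$, which for $k$ larger than some threshold $k_0$ (depending only on the desired accuracy) yields $((k)_j/k^j)^\gamma \le C_j$ for a sequence $\{C_j\}$ with $\sum_j C_j \lambda^j/(j!)^\gamma<\infty$; since the COM-Poisson series converges superexponentially in $j$ when $\gamma>0$, any fixed polynomial-type inflation of its summands is still summable. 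For $\gamma<0$, the ratio $(k)_j/k^j\ge 1$ is raised to a negative power and hence bounded above by $1$, so the summands are dominated by $\lambda^j/(j!)^\gamma$ directly, and convergence of the COM-Poisson series (which requires the appropriate restriction on $\lambda$) gives the domination. In either case, dominated convergence yields $Z(\gamma,k,q)\to Z_{CP}(\lambda,\gamma)$.

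Finally, combining the two limits, for every fixed $y\ge 0$,
\[
P(Y=y)\;=\;\frac{((k)_y/k^y)^\gamma\,\lambda^y/(y!)^\gamma}{Z(\gamma,k,q)}\ \longrightarrow\ \frac{\lambda^y/(y!)^\gamma}{Z_{CP}(\lambda,\gamma)},
\]
which is the COM-Poisson$(\lambda,\gamma)$ pmf. Pointwise convergence of pmfs on $\mathbb{Z}_{\ge 0}$ is equivalent to convergence in distribution, completing the proof. I expect the bulk of the write-up to be the dominated-convergence justification for $Z$; the numerator limit and the final assembly are essentially one-line calculations.
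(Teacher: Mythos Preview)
Your numerator calculation is exactly the paper's argument, just written with Pochhammer notation instead of expanding the factorials by hand; the paper then handles the normalizing constant with a single sentence (``the same approach can be used to write the denominator in a similar form''), i.e., it takes the termwise limit without justifying the interchange. Your attempt to supply that justification via dominated convergence is the right instinct and goes beyond what the paper does.

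However, your proposed dominating sequence for $\gamma>0$ does not work. The bound $\prod_{i=0}^{j-1}(1+i/k)\le e^{j(j-1)/(2k)}$ is correct, but freezing $k=k_0$ gives $C_j=e^{\gamma j(j-1)/(2k_0)}$, which grows like $e^{c j^2}$; this is \emph{not} ``polynomial-type inflation'' and it overwhelms the COM--Poisson decay $\lambda^j/(j!)^\gamma\sim e^{-\gamma j\log j}$, so $\sum_j C_j\lambda^j/(j!)^\gamma$ diverges. A clean fix is to exploit monotonicity directly: for fixed $j$ the product $\prod_{i<j}(1+i/k)$ is decreasing in $k$, so for all $k\ge k_0$ the $j$th summand is bounded by $\bigl((k_0)_j\bigr)^\gamma(\lambda/k_0^{\gamma})^j/(j!)^\gamma$, and the sum of these over $j$ is precisely $Z(\gamma,k_0,\lambda/k_0^{\gamma})$, which is finite as soon as $\lambda/k_0^{\gamma}<1$, i.e.\ once $k_0>\lambda^{1/\gamma}$. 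With that choice of $k_0$ you have a valid dominating sequence and the rest of your argument goes through unchanged. Your treatment of the case $\gamma<0$ is fine as stated.
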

\begin{proof}
For the NGNB distribution in (\ref{e:COM-NB}), the numerator can be written as
\bq
\begin{array}{lcl} \label{eq: COMP-NB convergence}
\left({\frac{(y+k-1)!}{y!(k-1)!}}\right)^\gamma {q}^y  \nonumber\\
=\frac{q^y}{y!^\gamma}\left(\frac{(y+k-1)(y+k-2)\cdots(y+k-1-(y-1))(k-1)!}{(k-1)!}\right)^\gamma\nonumber\\
=\frac{q^y (k)^{y\gamma}}{y!^\gamma}{\left((1+\frac{y-1}{k})(1+\frac{y-2}{k})\cdots(1+\frac{y-y}{k})\right)^\gamma} \nonumber\\
=\frac{{(q k^\gamma)}^y}{y!^\gamma}{\left((1+\frac{y-1}{k})(1+\frac{y-2}{k})\cdots(1+\frac{y-y}{k})\right)}^\gamma  \nonumber\\
\rightarrow \frac{\lambda^y}{(y!)^\gamma}
\end{array}
\enq
as $k\rightarrow\infty$ and $q \rightarrow 0$, such that $\lambda=k^\gamma q$ remains fixed.\\
 The same approach can be used to write the denominator in a similar form. Hence,
\bqn
\frac{{\binom{y+k-1}{y}}^\gamma q^{y}}{\sum_{j=0}^{\infty}{\binom{j+k-1}{j}}^\gamma q^{j}} \rightarrow \frac{\lambda^y}{{y!}^\gamma}\frac{1}{\sum_{j=0}^\infty \frac{\lambda^j}{{j!}^\gamma}} .
\enqn
This is the pf of the COM-Poisson distribution.\\
Hence the proof. \end{proof}
\end{appendices}

\begin{landscape}
 \begin {table}[H]
\footnotesize
\caption{Exact and approximate closed-form of mean and variance for various NGNB parameters ($\gamma$,q,k), $\gamma$ = (0.3, 0.4, 0.5, 0.6, 0.7, 0.8, 0.9), q=(0.1,0.2, 0.3, 0.4, 0.5), k=(5, 6, 7, 8, 9, 10)) \label{tab: COMNB moments1}}
\begin{center}
\scalebox{0.65}{
\begin{tabular}{|c|c|c|c|c|c|c|c|c|c|c|c|c|c|c|c|c|c|c|c|c|c|c|c|c|c|c|c|c|c|} \hline
  \multirow{1}{*}{$q$} & \multicolumn{5}{|c|}{$\gamma=0.3$}&\multicolumn{4}{|c|}{$\gamma=0.4$}&\multicolumn{4}{|c|}{$\gamma=0.5$}&\multicolumn{4}{|c|}{$\gamma=0.6$}&\multicolumn{4}{|c|}{$\gamma=0.7$} &\multicolumn{4}{|c|}{$\gamma=0.8$} &\multicolumn{4}{|c|}{$\gamma=0.9$}   \\ \hline
  \multirow{6}{*}{0.1}&k& E(Y)&$\frac{k \gamma q}{1-q}$ & Var(Y)&$\frac{k \gamma q}{(1-q)^2}$&E(Y)&$\frac{k \gamma q}{1-q}$ & Var(Y)&$\frac{k \gamma q}{(1-q)^2}$&E(Y)&$\frac{k \gamma q}{1-q}$ & Var(Y)&$\frac{k \gamma q}{(1-q)^2}$&E(Y)&$\frac{k \gamma q}{1-q}$ & Var(Y)&$\frac{k \gamma q}{(1-q)^2}$&E(Y)&$\frac{k \gamma q}{1-q}$ & Var(Y)&$\frac{k \gamma q}{(1-q)^2}$&E(Y)&$\frac{k \gamma q}{1-q}$ & Var(Y)&$\frac{k \gamma q}{(1-q)^2}$&E(Y)&$\frac{k \gamma q}{1-q}$ & Var(Y)&$\frac{k \gamma q}{(1-q)^2}$\\ \cline{2-30}
  &5&0.18& 0.17& 0.21& 0.19& 0.22& 0.22& 0.24& 0.25& 0.25& 0.28& 0.29& 0.31&0.30& 0.33& 0.34& 0.37& 0.35& 0.39& 0.40& 0.43& 0.41& 0.44& 0.46& 0.49& 0.48& 0.50& 0.54& 0.56\\ \cline{2-30}
  &6&0.19& 0.20& 0.22& 0.22&  0.23& 0.27& 0.27& 0.30& 0.28& 0.33& 0.32& 0.37& 0.34& 0.40& 0.38& 0.44& 0.40& 0.47& 0.46& 0.52& 0.48& 0.53& 0.54& 0.59 &0.57& 0.60& 0.64 &0.67\\ \cline{2-30}
  &7&0.20& 0.23& 0.23& 0.26&  0.25& 0.31& 0.29& 0.35&  0.30& 0.39& 0.35& 0.43& 0.37& 0.47& 0.43& 0.52& 0.45& 0.54& 0.52& 0.60& 0.54& 0.62& 0.62& 0.69& 0.65& 0.70& 0.74& 0.78 \\ \cline{2-30}
  &8&0.21& 0.27& 0.24& 0.30& 0.26& 0.36& 0.30& 0.40& 0.33& 0.44& 0.38& 0.49& 0.40& 0.53& 0.47& 0.59& 0.50& 0.62& 0.57& 0.69& 0.61& 0.71& 0.70& 0.79& 0.74& 0.80& 0.84& 0.89 \\ \cline{2-30}
 &9& 0.22& 0.30& 0.25& 0.33& 0.28& 0.40& 0.32& 0.44& 0.35& 0.50& 0.41& 0.56&  0.44& 0.60& 0.51& 0.67& 0.54& 0.70& 0.63& 0.78& 0.67& 0.80& 0.77& 0.89& 0.82& 0.90& 0.94& 1.00 \\ \cline{2-30}
  &10& 0.23& 0.33& 0.26& 0.37& 0.29& 0.44& 0.34& 0.49& 0.37& 0.56& 0.43& 0.62& 0.47& 0.67& 0.55& 0.74&  0.59& 0.78& 0.69& 0.86& 0.74& 0.89& 0.85& 0.99&  0.91& 1.00& 1.03& 1.11 \\ \hline
  \multirow{6}{*}{0.2}&5&0.42& 0.38& 0.54& 0.47&  0.50& 0.50& 0.64& 0.62& 0.59& 0.62& 0.76& 0.78& 0.69& 0.75& 0.90& 0.94&  0.81& 0.88& 1.05& 1.09& 0.94& 1.00& 1.21& 1.25&  1.09& 1.12& 1.38& 1.41 \\ \cline{2-30}
  &6&0.45& 0.45& 0.58& 0.56& 0.54& 0.60& 0.71& 0.75& 0.65& 0.75& 0.86& 0.94& 0.65& 0.75& 0.86 &0.94& 0.78& 0.90& 1.03 &1.12 &0.93 &1.05& 1.22 &1.31& 1.10& 1.20 &1.43 &1.50 \\ \cline{2-30}
  &7& 0.47& 0.52& 0.62& 0.66& 0.58& 0.70& 0.77& 0.87& 0.71& 0.88& 0.95& 1.09& 0.87& 1.05& 1.15& 1.31&  1.05& 1.22& 1.39& 1.53&  1.26& 1.40& 1.64& 1.75& 1.49& 1.57& 1.91& 1.97 \\ \cline{2-30}
  &8& 0.50& 0.60& 0.65& 0.75& 0.62& 0.80& 0.83& 1.00& 0.77& 1.00& 1.04& 1.25& 0.95& 1.20& 1.28& 1.50& 1.17& 1.40& 1.56& 1.75& 1.42& 1.60& 1.86& 2.00& 1.69& 1.80& 2.18& 2.25 \\ \cline{2-30}
  &9&0.52& 0.67 &0.69& 0.84& 0.66& 0.90& 0.88& 1.12 &0.83& 1.12& 1.12& 1.41& 1.04& 1.35 &1.40& 1.69& 1.28& 1.57& 1.73& 1.97& 1.57& 1.80 &2.07& 2.25& 1.89& 2.02&2.44& 2.53 \\ \cline{2-30}
  &10&0.54& 0.75& 0.72& 0.94& 0.69& 1.00& 0.94& 1.25& 0.88& 1.25& 1.21& 1.56& 1.12 &1.50& 1.53& 1.88& 1.40& 1.75& 1.89& 2.19& 1.73& 2.00& 2.29& 2.50& 2.10& 2.25& 2.71& 2.81 \\ \hline
  \multirow{6}{*}{0.3}&5&0.73& 0.64& 1.10& 0.92& 0.87& 0.86& 1.31& 1.22& 1.03& 1.07& 1.56& 1.53& 1.21& 1.29& 1.83& 1.84& 1.42& 1.50& 2.12& 2.14& 1.64& 1.71& 2.43& 2.45& 1.88& 1.93 &2.74& 2.76 \\ \cline{2-30}
   &6& 0.79& 0.77& 1.19 &1.10& 0.96& 1.03& 1.46& 1.47& 1.15& 1.29& 1.77& 1.84 & 1.38& 1.54& 2.11& 2.20& 1.64& 1.80& 2.49& 2.57& 1.93& 2.06& 2.88& 2.94& 2.24& 2.31& 3.27& 3.31 \\ \cline{2-30}
   &7&0.84& 0.90 &1.28& 1.29& 1.03& 1.20& 1.60& 1.71& 1.27& 1.50& 1.98& 2.14& 1.55& 1.80& 2.40& 2.57& 1.86& 2.10& 2.85& 3.00& 2.21& 2.40& 3.33& 3.43& 2.59& 2.70& 3.81& 3.86 \\\cline{2-30}
   &8&0.88& 1.03& 1.36& 1.47& 1.11& 1.37& 1.74& 1.96& 1.39& 1.71& 2.18& 2.45& 1.71& 2.06& 2.68& 2.94& 2.08& 2.40& 3.22& 3.43& 2.50& 2.74& 3.78& 3.92& 2.95& 3.09& 4.34& 4.41 \\ \cline{2-30}
   &9& 0.92& 1.16& 1.44& 1.65& 1.18& 1.54& 1.87& 2.20& 1.50& 1.93& 2.38& 2.76& 1.87& 2.31& 2.96& 3.31& 2.30& 2.70& 3.59& 3.86& 2.78& 3.09& 4.23& 4.41& 3.30& 3.47& 4.88& 4.96 \\ \cline{2-30}
   &10& 0.97& 1.29& 1.52& 1.84& 1.25& 1.71& 2.00& 2.45& 1.61& 2.14& 2.59& 3.06& 2.03& 2.57& 3.25& 3.67& 2.52& 3.00& 3.96& 4.29& 3.07& 3.43& 4.69& 4.90& 3.66& 3.86& 5.41& 5.51\\ \hline
 \multirow{6}{*}{0.4}&5&1.17& 1.00& 2.06& 1.67& 1.39& 1.33& 2.48& 2.22& 1.65& 1.67& 2.94& 2.78&1.94& 2.00& 3.44 &3.33& 2.25& 2.33& 3.96& 3.89& 2.59& 2.67& 4.49& 4.44& 2.96& 3.00& 5.02& 5.00 \\ \cline{2-30}
 &6&1.26& 1.20& 2.25& 2.00& 1.54& 1.60& 2.78& 2.67&1.86& 2.00& 3.37& 3.33& 2.22& 2.40& 4.00& 4.00& 2.63& 2.80 &4.66& 4.67& 3.06 &3.20 &5.34& 5.33 &3.52& 3.60 &6.01& 6.00 \\ \cline{2-30}
 &7&1.35& 1.40& 2.44& 2.33& 1.68& 1.87& 3.08& 3.11& 2.06&2.33& 3.79& 3.89& 2.51 &2.80 &4.57& 4.67& 3.00& 3.27& 5.37& 5.44 &3.52& 3.73& 6.18& 6.22 &4.08& 4.20& 6.99 &7.00\\\cline{2-30}
 &8& 1.43& 1.60& 2.62 &2.67& 1.81& 2.13& 3.37& 3.56& 2.26& 2.67& 4.22& 4.44& 2.79 &3.20& 5.14& 5.33& 3.36& 3.73& 6.09& 6.22& 3.99 &4.27 &7.04 &7.11 &4.65 &4.80& 7.97 &8.00 \\\cline{2-30}
 &9& 1.51 &1.80& 2.80& 3.00& 1.94& 2.40& 3.66& 4.00& 2.46& 3.00& 4.65& 5.00& 3.06& 3.60& 5.71& 6.00& 3.73& 4.20& 6.80& 7.00& 4.45& 4.80& 7.89& 8.00& 5.21& 5.40& 8.96& 9.00 \\\cline{2-30}
 &10& 1.58& 2.00& 2.97& 3.33& 2.07& 2.67& 3.95& 4.44& 2.66& 3.33& 5.08& 5.56&3.34& 4.00& 6.29& 6.67&4.10& 4.67& 7.52 &7.78 &4.92& 5.33& 8.74& 8.89&  5.78&  6.00&  9.94 &10.00 \\ \hline
 \multirow{6}{*}{0.5}&5&1.80 &1.50 &3.84 &3.00& 2.15& 2.00& 4.63& 4.00& 2.55& 2.50& 5.48& 5.00& 2.99& 3.00& 6.37& 6.00 &3.46& 3.50& 7.28& 7.00& 3.95& 4.00& 8.20& 8.00& 4.47& 4.50& 9.11& 9.00\\ \cline{2-30}
 &6&1.95& 1.80& 4.24& 3.60& 2.39& 2.40& 5.24& 4.80 &2.89& 3.00 &6.33& 6.00& 3.45 &3.60 &7.46& 7.20 &4.04& 4.20& 8.61& 8.40 &4.67& 4.80& 9.76& 9.60& 5.33 & 5.40 &10.89& 10.80\\\cline{2-30}
 &7&2.10& 2.10& 4.63& 4.20&2.63& 2.80& 5.85& 5.60& 3.23& 3.50& 7.17 &7.00& 3.91& 4.20& 8.56& 8.40& 4.63& 4.90& 9.95& 9.80&  5.40 & 5.60& 11.32 &11.20 & 6.19&  6.30& 12.67& 12.60\\\cline{2-30}
 &8&2.24& 2.40& 5.02& 4.80& 2.86& 3.20& 6.46& 6.40& 3.57& 4.00& 8.03& 8.00& 4.36& 4.80& 9.66& 9.60&  5.22&  5.60& 11.28& 11.20& 6.12&  6.40& 12.89& 12.80& 7.05&7.20& 14.46 & 14.40\\\cline{2-30}
 &9&2.38& 2.70& 5.40 &5.40& 3.08 &3.60& 7.07 &7.20& 3.90& 4.50 &8.89& 9.00&  4.82 & 5.40& 10.76& 10.80 &5.81 & 6.30& 12.63 &12.60 & 6.84 & 7.20 &14.45& 14.40&  7.91 & 8.10& 16.24& 16.20\\\cline{2-30}
 &10& 2.51& 3.00& 5.78 &6.00& 3.30& 4.00& 7.68& 8.00&  4.24&  5.00&  9.75& 10.00&  5.28&  6.00& 11.87& 12.00 & 6.40&  7.00& 13.97& 14.00 & 7.57&  8.00& 16.02& 16.00&  8.77&  9.00& 18.03& 18.00\\ \hline

\end{tabular}}
\end{center}
\end{table}
\end{landscape}
\newpage
\begin{landscape}
 \begin {table}[H]
\footnotesize
\caption{Exact and approximate closed-form of mean and variance for various NGNB parameters ($\gamma$,q,k), $\gamma$ = (0.3, 0.4, 0.5, 0.6, 0.7, 0.8, 0.9), q=(0.6, 0.7, 0.8, 0.9), k=(5, 6, 7, 8, 9, 10) \label{tab: COMNB moments2}}
\begin{center}
\scalebox{0.65}{
\begin{tabular}{|c|c|c|c|c|c|c|c|c|c|c|c|c|c|c|c|c|c|c|c|c|c|c|c|c|c|c|c|c|c|} \hline
  \multirow{1}{*}{$q$} & \multicolumn{5}{|c|}{$\gamma=0.3$}&\multicolumn{4}{|c|}{$\gamma=0.4$}&\multicolumn{4}{|c|}{$\gamma=0.5$}&\multicolumn{4}{|c|}{$\gamma=0.6$}&\multicolumn{4}{|c|}{$\gamma=0.7$} &\multicolumn{4}{|c|}{$\gamma=0.8$} &\multicolumn{4}{|c|}{$\gamma=0.9$}   \\ \hline
  \multirow{6}{*}{}&k& E(Y)&$\frac{k \gamma q}{1-q}$ & Var(Y)&$\frac{k \gamma q}{(1-q)^2}$&E(Y)&$\frac{k \gamma q}{1-q}$ & Var(Y)&$\frac{k \gamma q}{(1-q)^2}$&E(Y)&$\frac{k \gamma q}{1-q}$ & Var(Y)&$\frac{k \gamma q}{(1-q)^2}$&E(Y)&$\frac{k \gamma q}{1-q}$ & Var(Y)&$\frac{k \gamma q}{(1-q)^2}$&E(Y)&$\frac{k \gamma q}{1-q}$ & Var(Y)&$\frac{k \gamma q}{(1-q)^2}$&E(Y)&$\frac{k \gamma q}{1-q}$ & Var(Y)&$\frac{k \gamma q}{(1-q)^2}$&E(Y)&$\frac{k \gamma q}{1-q}$ & Var(Y)&$\frac{k \gamma q}{(1-q)^2}$\\ \cline{2-30}
  \multirow{6}{*}{0.6}&5&2.78& 2.25& 7.45& 5.62& 3.33& 3.00& 8.99& 7.50&  3.94&  3.75& 10.60&  9.37& 4.60&  4.50& 12.24& 11.25&  5.29&  5.25& 13.89& 13.12& 6.01&  6.00& 15.53& 15.00&  6.75&  6.75& 17.15& 16.87 \\ \cline{2-30}
  &6&3.04& 2.70& 8.31& 6.75& 3.73&  3.60& 10.26&  9.00&  4.51&  4.50& 12.30& 11.25&  5.34&  5.40& 14.38& 13.50&  6.22&  6.30& 16.45& 15.75&  7.13&  7.20& 18.50& 18.00& 8.06&  8.10& 20.51& 20.25\\ \cline{2-30}
  &7&3.29& 3.15& 9.16& 7.87& 4.13&  4.20& 11.53& 10.50&5.07&  5.25& 14.02& 13.12& 6.08&  6.30& 16.53& 15.75& 7.15&  7.35& 19.02& 18.37&8.25&  8.40& 21.46& 21.00& 9.37&  9.45& 23.87 &23.62\\ \cline{2-30}
&8&3.54&3.6&10&9&4.52&4.8&12.81&12&5.63&6&15.75&15&6.82&7.2&18.69&18&8.07&8.4&21.59&21&9.36&9.6&24.43&24&10.67&10.8&27.23&27\\ \cline{2-30}
&9&3.78&4.05&10.84&10.12&4.91&5.4&14.1&13.5&6.19&6.75&17.48&16.87&7.56&8.1&20.86&20.25&9&9.45&24.16&23.62&10.48&10.8&27.4&27&11.98&12.15&30.59&30.37\\ \cline{2-30}
&10&4.02&4.5&11.69&11.25&5.3&6&15.39&15&6.74&7.5&19.23&18.75&8.3&9&23.02&22.5&9.93&10.5&26.73&26.25&11.6&12&30.36&30&13.29&13.5&33.95&33.75\\ \hline
 \multirow{6}{*}{0.7}&5&4.46&  3.50& 15.99& 11.67& 5.36&  4.67& 19.23& 15.56& 6.33&  5.83& 22.55& 19.44& 7.35&  7.00& 25.87& 23.33& 8.40&8.17& 29.18& 27.22& 9.48& 9.33& 32.44& 31.11& 10.57& 10.50& 35.68& 35.00 \\ \cline{2-30}
 &6&4.93&4.2&17.97&14&6.06&5.6&22.08&18.67&7.29&7&26.27&23.33&8.58&8.4&30.44&28&9.9&9.8&34.56&32.67&11.26&11.2&38.63&37.33&12.62&12.6&42.66&42\\ \cline{2-30}
&7&5.38&4.9&19.95&16.33&6.76&6.53&24.95&21.78&8.24&8.17&30.01&27.22&9.8&9.8&35.02&32.67&11.41&11.43&39.96&38.11&13.04&13.07&44.82&43.56&14.68&14.7&49.65&49\\ \cline{2-30}
&8&5.83&5.6&21.95&18.67&7.45&7.47&27.84&24.89&9.2&9.33&33.77&31.11&11.03&11.2&39.61&37.33&12.91&13.07&45.35&43.56&14.81&14.93&51.01&49.78&16.74&16.8&56.63&56\\ \cline{2-30}
&9&6.27&6.3&23.95&21&8.13&8.4&30.74&28&10.15&10.5&37.54&35&12.26&12.6&44.2&42&14.41&14.7&50.74&49&16.59&16.8&57.2&56&18.79&18.9&63.62&63\\ \cline{2-30}
&10&6.72&7&25.96&23.33&8.82&9.33&33.66&31.11&11.11&11.67&41.31&38.89&13.48&14&48.79&46.67&15.91&16.33&56.13&54.44&18.37&18.67&63.39&62.22&20.85&21&70.6&70\\ \hline
\multirow{6}{*}{0.8}& 5&7.95&6&42.4&30&9.55&8&50.73&40&11.22&10&59.09&50&12.94&12&67.38&60&14.69&14&75.61&70&16.45&16&83.77&80&18.22&18&91.9&90\\ \cline{2-30}
&6&8.86&7.2&47.98&36&10.88&9.6&58.48&48&12.99&12&68.95&60&15.15&14.4&79.31&72&17.35&16.8&89.56&84&19.56&19.2&99.75&96&21.78&21.6&109.89&108\\ \cline{2-30}
&7&9.77&8.4&53.59&42&12.21&11.2&66.27&56&14.76&14&78.84&70&17.37&16.8&91.24&84&20.01&19.6&103.51&98&22.66&22.4&115.71&112&25.33&25.2&127.87&126\\ \cline{2-30}
&8&10.67&9.6&59.23&48&13.54&12.8&74.08&64&16.53&16&88.73&80&19.58&19.2&103.17&96&22.67&22.4&117.46&112&25.77&25.6&131.68&128&28.88&28.8&145.85&144\\ \cline{2-30}
&9&11.57&10.8&64.89&54&14.87&14.4&81.91&72&18.3&18&98.63&90&21.8&21.6&115.09&108&25.33&25.2&131.41&126&28.88&28.8&147.65&144&32.44&32.4&163.84&162\\ \cline{2-30}
&10&12.47&12&70.59&60&16.2&16&89.75&80&20.08&20&108.52&100&24.02&24&127.01&120&27.99&28&145.35&140&31.99&32&163.61&160&35.99&36&181.82&180\\ \hline
 \multirow{6}{*}{0.9}&5&18.73&13.5&195.64&135&22.38&18&232.42&180&26.11&22.5&269&225&29.87&27&305.37&270&33.64&31.5&341.61&315&37.42&36&377.78&360&41.21&40.5&413.9&405\\ \cline{2-30}
&6&21.09&16.2&222.25&162&25.69&21.6&268.27&216&30.36&27&313.92&270&35.07&32.4&359.32&324&39.79&37.8&404.57&378&44.53&43.2&449.75&432&49.26&48.6&494.89&486\\ \cline{2-30}
&7&23.45&18.9&248.93&189&28.99&25.2&304.14&252&34.62&31.5&358.84&315&40.28&37.8&413.25&378&45.95&44.1&467.52&441&51.63&50.4&521.72&504&57.31&56.7&575.87&567\\ \cline{2-30}
&8&25.81&21.6&275.68&216&32.3&28.8&340.02&288&38.88&36&403.75&360&45.49&43.2&467.18&432&52.11&50.4&530.47&504&58.73&57.6&593.68&576&65.37&64.8&656.85&648\\ \cline{2-30}
&9&28.17&24.3&302.46&243&35.62&32.4&375.9&324&43.14&40.5&448.65&405&50.69&48.6&521.1&486&58.26&56.7&593.41&567&65.84&64.8&665.64&648&73.42&72.9&737.84&729\\ \cline{2-30}
&10&30.54&27&329.25&270&38.93&36&411.77&360&47.4&45&493.54&450&55.9&54&575.02&540&64.42&63&656.35&630&72.94&72&737.61&720&81.47&81&818.82&810\\ \hline
\end{tabular}}
\end{center}
\end{table}
\end{landscape}\newpage
\begin{landscape}
 \begin {table}[H]
\footnotesize
\caption{Exact and approximate closed-form of mean and variance for various NGNB parameters ($\gamma$,q,k), $\gamma$ (1.2, 1.4, 1.5, 1.6, 1.8, 2), q=(0.2, 0.4, 0.5, 0.6, 0.7, 0.8), k=(5, 6, 7, 8, 9, 10) \label{tab: COMNB moments3}}
\begin{center}
\scalebox{0.70}{
\begin{tabular}{|c|c|c|c|c|c|c|c|c|c|c|c|c|c|c|c|c|c|c|c|c|c|c|c|c|c|} \hline
  \multirow{1}{*}{$q$} & \multicolumn{5}{|c|}{$\gamma=1.2$}&\multicolumn{4}{|c|}{$\gamma=1.4$}&\multicolumn{4}{|c|}{$\gamma=1.5$}&\multicolumn{4}{|c|}{$\gamma=1.6$}&\multicolumn{4}{|c|}{$\gamma=1.8$} &\multicolumn{4}{|c|}{$\gamma=2$}   \\ \cline{2-26}
  &k& E(Y)&$\frac{k \gamma q}{1-q}$ & Var(Y)&$\frac{k \gamma q}{(1-q)^2}$&E(Y)&$\frac{k \gamma q}{1-q}$ & Var(Y)&$\frac{k \gamma q}{(1-q)^2}$&E(Y)&$\frac{k \gamma q}{1-q}$ & Var(Y)&$\frac{k \gamma q}{(1-q)^2}$&E(Y)&$\frac{k \gamma q}{1-q}$ & Var(Y)&$\frac{k \gamma q}{(1-q)^2}$&E(Y)&$\frac{k \gamma q}{1-q}$ & Var(Y)&$\frac{k \gamma q}{(1-q)^2}$&E(Y)&$\frac{k \gamma q}{1-q}$ & Var(Y)&$\frac{k \gamma q}{(1-q)^2}$\\ \hline
   \multirow{6}{*}{0.2}&5& 1.61& 1.50&  1.93&  1.88& 2.02& 1.75&  2.30&  2.19& 2.23& 1.88&  2.48&  2.34& 2.46& 2.00&  2.65&  2.50& 2.91& 2.25&  2.99&  2.81& 3.37& 2.50&  3.31&  3.12\\ \cline{2-26}
 &6& 1.97& 1.80&  2.34&  2.25& 2.49& 2.10&  2.79&  2.62& 2.76& 2.25&  3.00&  2.81& 3.04& 2.40&  3.22&  3.00& 3.61& 2.70&  3.63&  3.37& 4.19& 3.00&  4.04&  3.75\\ \cline{2-26}
 &7& 2.32& 2.10&  2.74&  2.62& 2.95& 2.45&  3.27&  3.06& 3.28& 2.62&  3.53&  3.28& 3.62& 2.80&  3.78&  3.50& 4.30& 3.15&  4.28&  3.94& 5.00& 3.50&  4.77&  4.38\\ \cline{2-26}
 &8& 2.68& 2.40&  3.14&  3.00& 3.42& 2.80&  3.75&  3.50& 3.81& 3.00&  4.05&  3.75& 4.20& 3.20&  4.34&  4.00& 5.00& 3.60&  4.92&  4.50& 5.81& 4.00&  5.50&  5.00\\ \cline{2-26}
 &9& 3.04& 2.70&  3.54&  3.37& 3.89& 3.15&  4.24&  3.94& 4.33& 3.38&  4.58&  4.22& 4.78& 3.60&  4.91&  4.50& 5.69& 4.05&  5.57&  5.06& 6.62& 4.50&  6.23&  5.62\\ \cline{2-26}
&10& 3.39& 3.00&  3.94&  3.75& 4.35& 3.50&  4.72&  4.38& 4.85& 3.75&  5.10&  4.69& 5.35& 4.00&  5.48&  5.00& 6.38& 4.50&  6.22&  5.62& 7.43& 5.00&  6.96&  6.25\\ \hline
 \multirow{6}{*}{0.4}&5& 4.13& 4.00&  6.59&  6.67& 4.95& 4.67&  7.60&  7.78& 5.36& 5.00&  8.10&  8.33& 5.79& 5.33&  8.59&  8.89& 6.63& 6.00&  9.57& 10.00& 7.48& 6.67& 10.55& 11.11\\ \cline{2-26}
 &6& 5.00& 4.80&  7.95&  8.00& 6.03& 5.60&  9.21&  9.33& 6.56& 6.00&  9.83& 10.00& 7.08& 6.40& 10.44& 10.67& 8.14& 7.20& 11.67& 12.00& 9.21& 8.00& 12.88& 13.33\\ \cline{2-26}
 &7& 5.87& 5.60&  9.31&  9.33& 7.12& 6.53& 10.81& 10.89& 7.74& 7.00& 11.56& 11.67& 8.38& 7.47& 12.29& 12.44& 9.65& 8.40& 13.76& 14.00&10.93& 9.33& 15.22& 15.56\\ \cline{2-26}
 &8& 6.75& 6.40& 10.67& 10.67& 8.20& 7.47& 12.42& 12.44& 8.93& 8.00& 13.29& 13.33& 9.67& 8.53& 14.15& 14.22&11.16& 9.60& 15.86& 16.00&12.65&10.67& 17.56& 17.78\\ \cline{2-26}
& 9& 7.62& 7.20& 12.03& 12.00& 9.28& 8.40& 14.03& 14.00&10.12& 9.00& 15.02& 15.00&10.97& 9.60& 16.00& 16.00&12.66&10.80& 17.96& 18.00&14.37&12.00& 19.90& 20.00\\ \cline{2-26}
&10& 8.49& 8.00& 13.40& 13.33&10.36& 9.33& 15.64& 15.56&11.31&10.00& 16.75& 16.67&12.26&10.67& 17.85& 17.78&14.17&12.00& 20.05& 20.00&16.09&13.33& 22.24& 22.22\\ \hline
 \multirow{6}{*}{0.5}&5& 6.09& 6.00& 11.75& 12.00& 7.21& 7.00& 13.47& 14.00& 7.77& 7.50& 14.33& 15.00& 8.34& 8.00& 15.18& 16.00& 9.47& 9.00& 16.87& 18.00&10.61&10.00& 18.56& 20.00\\ \cline{2-26}
 &6& 7.37& 7.20& 14.18& 14.40& 8.77& 8.40& 16.32& 16.80& 9.47& 9.00& 17.39& 18.00&10.18& 9.60& 18.45& 19.20&11.60&10.80& 20.57& 21.60&13.02&12.00& 22.68& 24.00\\ \cline{2-26}
 &7& 8.65& 8.40& 16.61& 16.80&10.33& 9.80& 19.18& 19.60&11.18&10.50& 20.45& 21.00&12.03&11.20& 21.73& 22.40&13.73&12.60& 24.27& 25.20&15.44&14.00& 26.79& 28.00\\ \cline{2-26}
 &8& 9.93& 9.60& 19.04& 19.20&11.89&11.20& 22.03& 22.40&12.88&12.00& 23.52& 24.00&13.87&12.80& 25.01& 25.60&15.86&14.40& 27.97& 28.80&17.85&16.00& 30.92& 32.00\\ \cline{2-26}
 &9&11.21&10.80& 21.46& 21.60&13.46&12.60& 24.89& 25.20&14.58&13.50& 26.59& 27.00&15.72&14.40& 28.28& 28.80&17.99&16.20& 31.66& 32.40&20.27&18.00& 35.04& 36.00\\ \cline{2-26}
&10&12.49&12.00& 23.89& 24.00&15.02&14.00& 27.74& 28.00&16.29&15.00& 29.65& 30.00&17.56&16.00& 31.56& 32.00&20.12&18.00& 35.36& 36.00&22.68&20.00& 39.16& 40.00\\ \hline
\multirow{6}{*}{0.6}& 5& 9.02& 9.00& 21.90& 22.50&10.56&10.50& 25.03& 26.25&11.33&11.25& 26.58& 28.12&12.10&12.00& 28.13& 30.00&13.66&13.50& 31.22& 33.75&15.21&15.00& 34.31& 37.50\\ \cline{2-26}
 &6&10.91&10.80& 26.43& 27.00&12.83&12.60& 30.33& 31.50&13.80&13.50& 32.27& 33.75&14.76&14.40& 34.21& 36.00&16.70&16.20& 38.07& 40.50&18.65&18.00& 41.93& 45.00\\ \cline{2-26}
 &7&12.79&12.60& 30.96& 31.50&15.10&14.70& 35.63& 36.75&16.26&15.75& 37.96& 39.37&17.42&16.80& 40.29& 42.00&19.75&18.90& 44.92& 47.25&22.08&21.00& 49.56& 52.50\\ \cline{2-26}
 &8&14.68&14.40& 35.49& 36.00&17.37&16.80& 40.94& 42.00&18.73&18.00& 43.65& 45.00&20.08&19.20& 46.36& 48.00&22.80&21.60& 51.78& 54.00&25.52&24.00& 57.18& 60.00\\ \cline{2-26}
 &9&16.56&16.20& 40.02& 40.50&19.65&18.90& 46.24& 47.25&21.19&20.25& 49.35& 50.62&22.74&21.60& 52.44& 54.00&25.85&24.30& 58.63& 60.75&28.96&27.00& 64.80& 67.50\\ \cline{2-26}
&10&18.45&18.00& 44.55& 45.00&21.92&21.00& 51.55& 52.50&23.66&22.50& 55.04& 56.25&25.40&24.00& 58.52& 60.00&28.89&27.00& 65.48& 67.50&32.39&30.00& 72.42& 75.00\\ \hline
\multirow{6}{*}{0.7}& 5&13.88&14.00& 45.27& 46.67&16.10&16.33& 51.61& 54.44&17.21&17.50& 54.78& 58.33&18.33&18.67& 57.94& 62.22&20.56&21.00& 64.26& 70.00&22.80&23.33& 70.57& 77.78\\ \cline{2-26}
 &6&16.77&16.80& 54.63& 56.00&19.55&19.60& 62.55& 65.33&20.94&21.00& 66.51& 70.00&22.33&22.40& 70.46& 74.67&25.13&25.20& 78.36& 84.00&27.92&28.00& 86.24& 93.33\\ \cline{2-26}
 &7&19.66&19.60& 63.99& 65.33&22.99&22.87& 73.50& 76.22&24.67&24.50& 78.24& 81.67&26.34&26.13& 82.98& 87.11&29.69&29.40& 92.46& 98.00&33.04&32.67&101.92&108.89\\ \cline{2-26}
 &8&22.55&22.40& 73.35& 74.67&26.44&26.13& 84.44& 87.11&28.39&28.00& 89.98& 93.33&30.34&29.87& 95.51& 99.56&34.25&33.60&106.56&112.00&38.16&37.33&117.60&124.44\\ \cline{2-26}
 &9&25.44&25.20& 82.71& 84.00&29.89&29.40& 95.39& 98.00&32.12&31.50&101.71&105.00&34.35&33.60&108.03&112.00&38.82&37.80&120.66&126.00&43.29&42.00&133.28&140.00\\ \cline{2-26}
&10&28.33&28.00& 92.08& 93.33&33.33&32.67&106.33&108.89&35.84&35.00&113.45&116.67&38.35&37.33&120.56&124.44&43.38&42.00&134.76&140.00&48.41&46.67&148.96&155.56\\ \hline
\multirow{6}{*}{0.8}&5&23.57&24.00&116.15&120.00&27.14&28.00&132.28&140.00&28.93&30.00&140.33&150.00&30.71&32.00&148.38&160.00&34.29&36.00&164.48&180.00&37.87&40.00&180.57&200.00\\ \cline{2-26}
 &6&28.46&28.80&140.18&144.00&32.93&33.60&160.33&168.00&35.16&36.00&170.40&180.00&37.40&38.40&180.46&192.00&41.87&43.20&200.58&216.00&46.35&48.00&220.69&240.00\\ \cline{2-26}
 &7&33.35&33.60&164.21&168.00&38.71&39.20&188.39&196.00&41.40&42.00&200.47&210.00&44.08&44.80&212.54&224.00&49.45&50.40&236.68&252.00&54.82&56.00&260.82&280.00\\ \cline{2-26}
 &8&38.25&38.40&188.24&192.00&44.50&44.80&216.44&224.00&47.63&48.00&230.54&240.00&50.76&51.20&244.62&256.00&57.02&57.60&272.79&288.00&63.29&64.00&300.94&320.00\\ \cline{2-26}
 &9&43.14&43.20&212.27&216.00&50.29&50.40&244.50&252.00&53.86&54.00&260.60&270.00&57.44&57.60&276.70&288.00&64.60&64.80&308.89&324.00&71.76&72.00&341.06&360.00\\ \cline{2-26}
&10&48.03&48.00&236.30&240.00&56.08&56.00&272.56&280.00&60.10&60.00&290.67&300.00&64.12&64.00&308.78&320.00&72.18&72.00&344.99&360.00&80.23&80.00&381.19&400.00\\ \hline
\end{tabular}}
\end{center}
\end{table}
\end{landscape}

  \bibliography{}
 
\end{document}